\documentclass[a4paper, 12pt]{amsart}
\usepackage{amsmath}
\usepackage{amscd}
\usepackage{amssymb}
\usepackage{amsthm}
\usepackage{amsfonts}
\usepackage{latexsym}
\usepackage{verbatim}
\usepackage{amsthm}

\usepackage[left=2cm, top=2.6cm, bottom=2.6cm, right=2cm]{geometry}

\numberwithin{equation}{section}

\theoremstyle{plain}

\newcommand{\id}{\mathop{\mathrm{id}}\nolimits}

\newcommand{\J}{\mathop{\frak J}\nolimits}
\newcommand{\wJ}{\mathop{\widetilde{\frak J}}\nolimits}

\newcommand{\R}{\mathbb{R}}
\newcommand{\C}{\mathbb{C}}

\newtheorem{newstatement}{newstatement}
\newtheorem{definition}[newstatement]{Definition}
\newtheorem{lemma}[newstatement]{Lemma}
\newtheorem{theorem}[newstatement]{Theorem}
\newtheorem{corollary}[newstatement]{Corollary}
\newtheorem{question}[newstatement]{Question}

\newtheorem{proposition}[newstatement]{Proposition}

\hyphenation{hol-o-mor-phic hol-o-mor-phically pseudo-hol-o-mor-phically pseudo-hol-o-mor-phic}

\frenchspacing

\begin{document}

\title[Embedding almost-complex manifolds]{Embedding almost-complex manifolds in almost-complex Euclidean spaces}
\author{Antonio J. Di Scala \and Daniele Zuddas} \date{\today} \address{Dipartimento di Matematica \\ Politecnico di Torino \\ Corso Duca degli Abruzzi 24, 10129 Torino, Italy} \email{antonio.discala@polito.it} \address{Dipartimento di Matematica e Informatica \\ Universit\`a di Cagliari\\ Via Ospedale 72 \\ 09124 Cagliari\\ Italy} \email{d.zuddas@gmail.com}

\subjclass[2000]{Primary 32Q60 ; Secondary 32Q65}

\thanks{This work was supported by the Project M.I.U.R. PRIN~05 ``Riemannian Metrics and  Differentiable Manifolds'' and by G.N.S.A.G.A. of I.N.d.A.M.\\
\indent The second author was supported by Regione Autonoma della Sardegna with funds from PO Sardegna 
FSE~2007--2013 and L.R.~7/2007 ``Promotion of scientific research and technological 
innovation in Sardinia''.}

\begin{abstract} We show that any compact almost-complex manifold $(M,J)$ of complex dimension $m$ can be pseudo-holomorphically embedded in $\R^{6m}$ equipped with a suitable almost-complex structure $\widetilde J$.

\medskip\noindent
{\sc Keywords:}\hskip1ex embedding, almost-complex structure, manifold, pseudo-holomorphic embedding.

\medskip\noindent
{\sc AMS Classification:}\hskip1ex 32Q60, 32H02.
\end{abstract}
\maketitle

\section{Introduction} An almost-complex structure on a $2n$-dimensional smooth manifold $M$ is a tensor $J\in {\rm End}(TM)$ such that $J^2= -\id$. If $M$ is oriented we say that $J$ is \emph{positive} if the orientation induced by $J$ on $M$ agrees with the given one. An almost-complex structure is called \emph{integrable} if it is induced by a holomorphic atlas.
In dimension two any almost-complex structure is integrable, while
in higher dimension this is far from true.
A smooth map $f\colon N\to M$ between two almost-complex manifolds $(N,J')$, $(M,J)$ is called {\em pseudo-holomorphic} if $J \circ Tf = Tf \circ J'$, where $Tf : TN \to TM$ is the tangent map of $f$. When the map $f$ is an embedding, $(N,J')$ is said to be an \emph{almost-complex submanifold} of $(M,J)$. In this case we can identify $N$ with its image $f(N) \subset M$ and the almost-complex structure $J'$ with the restriction of $J$ to $TN \cong T(f(N)) \subset TM$.

If we equip $\R^{2n}$ with the canonical complex structure, that is to say $\R^{2n} \cong \C^n $, then it does not admit any compact complex submanifold (by the maximum principle). Thus, it is a very natural problem to ascertain if it is possible to find compact complex manifolds pseudo-holomorphically embedded in $\R^{2n}$ equipped with an integrable or non-integrable almost-complex structure.

In \cite{CE53} Calabi and Eckmann constructed the first examples of compact, simply connected complex manifolds $M_{p,q}$ which are not algebraic. Topologically $M_{p,q}$ is the product $S^{2p+1} \times S^{2q+1}$. Then by deleting a point on each factor one obtains a complex structure $J$ on $\mathbb{R}^{2p + 2q + 2}$. In section 5 of \cite{CE53} it was shown that when $p,q  > 1$ there exists a complex torus as a complex submanifold of $(\mathbb{R}^{2p + 2q + 2}, J)$ \cite[p.\,499]{CE53}. It follows that the Calabi-Eckmann complex structure $J$ on $\mathbb{R}^{2n}$ cannot be tamed by any symplectic form and in particular cannot be K\"ahler.
Calabi and Eckmann also observed that the only holomorphic functions on $(\mathbb{R}^{2p + 2q + 2}, J)$ are the constants answering negatively to a question raised by Bochner about the uniformization of complex structures on $\mathbb{R}^{2n}$. In \cite{Bry82} Bryant constructed  pseudo-holomorphic non-constant maps $\varphi: M^2 \rightarrow S^6$ for any compact Riemann surface $M^2$, where $S^6$ is equipped with the almost-complex structure induced by the octonion multiplication. These maps realize compact Riemann surfaces as pseudo-holomorphic singular curves in $S^6$.

In \cite{DV09} it was shown that any almost-complex torus $\mathbb{T}^n=\R^{2n}/\Lambda$ can be pseudo-hol\-o\-mor\-phically embedded into $(\R^{4n}, J_{\Lambda})$ for a suitable almost-complex structure $J_\Lambda$. It follows that any compact Riemann surface can be realized as a pseudo-holomorphic curve of some $(\mathbb{R}^{2n},J)$, where $J$ is a suitable almost-complex structure.

In this paper we prove the following general theorem.

\begin{theorem}\label{main}
Any compact almost-complex manifold $(M,J)$ of real dimension $2m$ can be pseudo-holo\-mor\-phically embedded in $(\R^{6m},\widetilde{J})$ for a suitable positive almost-complex structure $\widetilde J$.
\end{theorem}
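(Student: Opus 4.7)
The plan is to fix a smooth embedding $f\colon M\hookrightarrow\R^{6m}$, build a complex structure on its normal bundle, use this to define a compatible almost-complex structure on a tubular neighborhood of $f(M)$, and then extend to all of $\R^{6m}$.

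By Whitney's embedding theorem, pick any smooth embedding $f\colon M\hookrightarrow\R^{6m}$ and let $\nu\to M$ denote its normal bundle, of real rank $4m$. The key technical step is to equip $\nu$ with a complex vector bundle structure $J_\nu$. Since $Tf(TM)\oplus\nu$ is canonically the trivial bundle $T\R^{6m}|_{f(M)}\cong\underline{\R}^{6m}$ and $(TM,J)$ is complex, the class $[\nu]=-[TM]$ in $\widetilde{KO}(M)$ lies in the image of the realification map $\widetilde{KU}(M)\to\widetilde{KO}(M)$, so $\nu$ is stably complex. Because $\operatorname{rank}_\R\nu=4m\geq 2m=\dim M$, we are in the metastable range in which the canonical maps $[M,BU(2m)]\to[M,BU]$ and $[M,BSO(4m)]\to[M,BSO]$ are bijective, so the stable complex structure on $\nu$ lifts to an honest complex bundle structure $J_\nu$.

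With $(\nu,J_\nu)$ in hand, pick a Hermitian metric and a unitary connection on $\nu$. The induced horizontal/vertical splitting of the tangent bundle of $E=\operatorname{tot}(\nu)$ defines an almost-complex structure $\widehat J$ on $E$ by setting $\widehat J=J$ on horizontals and $\widehat J=J_\nu$ on verticals; the zero section $M\hookrightarrow E$ is pseudo-holomorphic by construction. The tubular neighborhood theorem identifies $E$ with an open neighborhood $U$ of $f(M)$ in $\R^{6m}$; transporting $\widehat J$ produces a positive almost-complex structure $\widetilde J$ on $U$ such that $f\colon M\to(U,\widetilde J)$ is pseudo-holomorphic. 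Finally, extend $\widetilde J$ from $U$ to all of $\R^{6m}$ by smoothly interpolating, on a transition annulus, to the standard $J_0$ of $\R^{6m}\cong\C^{3m}$ outside a large ball containing $\overline U$. The interpolation is possible because the bundle of positive almost-complex structures on the (trivial) tangent bundle $T\R^{6m}$ is itself trivial with path-connected fiber $SO(6m)/U(3m)$; adjusting the choice of $J_\nu$ within its stable class if necessary, one can arrange that the two prescribed sections are joined by a smoothly varying path of positive structures, producing a positive $\widetilde J$ globally on $\R^{6m}$.

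The principal obstacle is the first step, namely producing a complex structure on the rank-$4m$ normal bundle $\nu$: this rests on the metastable-range principle that a stably complex real vector bundle of sufficient rank admits an actual complex structure. Once this is granted, the remaining construction (the almost-complex structure on the total space of $\nu$ via a connection, and the extension to $\R^{6m}$ by interpolation in $SO(6m)/U(3m)$) proceeds by standard techniques.
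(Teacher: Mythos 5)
Your first half is essentially sound: the normal bundle $\nu$ of an embedding $M\hookrightarrow\R^{6m}$ does carry a complex structure, and your stable-range argument gives one (though it is more machinery than needed: since $\nu$ is oriented of rank $4m\ge 2\cdot\dim_\R M$, obstruction theory on the bundle of positive fibrewise complex structures, whose fibre $\wJ(2m)$ is $(2m-1)$-connected, already yields a \emph{positive} complex structure directly, with no appeal to stable complexity --- this is Proposition~\ref{sections/pro}; note also that your K-theoretic lift does not by itself guarantee that $J_\nu$ induces the canonical orientation of $\nu$, which you tacitly use when you call $\widetilde J$ positive). The connection/tubular-neighborhood construction is harmless but unnecessary, since pseudo-holomorphicity of the embedding only constrains $\widetilde J$ at points of $M$.

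The genuine gap is the final extension step. Matching $\widetilde J$ on a neighborhood of $f(M)$ with the standard $J_0$ near infinity is a section-extension problem for a bundle with fibre $\wJ(3m)\simeq SO(6m)/U(3m)$ over a $6m$-dimensional region, and path-connectedness of the fibre is nowhere near enough: pointwise you can join any two complex structures by a path, but nothing guarantees these paths can be chosen coherently, and the vague clause ``adjusting the choice of $J_\nu$ within its stable class if necessary'' is not an argument. If path-connectedness sufficed, the same words would show that every Riemann surface (embedded in $\R^4$ with trivial, hence complex, normal bundle) sits pseudo-holomorphically in some $(\R^4,\widetilde J)$, contradicting the result of \cite{DV09} recalled in the introduction that only the torus does; similarly the paper's remark that $(S^6,J)$ admits no pseudo-holomorphic embedding into any $(\R^{12},\widetilde J)$ shows the extension problem is genuinely obstructed in general. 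What is actually needed --- and what the paper proves --- is that the map $J\oplus J_\nu\colon M\to\wJ(3m)$, obtained from the trivialization of $T\R^{6m}|_M$, is null-homotopic: by the homotopy extension property (Lemma~\ref{HEP}) this is \emph{equivalent} to the existence of an extension of the almost-complex structure over $\R^{6m}$, and only the values along $M$ matter. The null-homotopy is exactly where the dimension count enters: $\wJ(3m)$ is $(3m-1)$-connected and $\dim M=2m\le 3m-1$, so Proposition~\ref{null-homot} applies. Your proposal never invokes this connectivity, so it omits the quantitative heart of why $\R^{6m}$ suffices; replace the interpolation step by the null-homotopy argument and the proof closes.
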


In particular, any compact Riemann surface can be realized as a pseudo-holomorphic curve in $(\R^6,\widetilde J)$.
In \cite{DV09} was shown that the torus is the only compact Riemann surface that can be pseudo-holomorphically embedded in $(\R^{4}, \widetilde J)$ for some $\widetilde J$.

\section{Preliminaries}

The space of positive linear complex structures on $\R^{2n}$ is diffeomorphic to the homogeneous space $\wJ(n) =
GL^+(2n, \R) / GL(n, \C)$ and is homotopy equivalent to $\J(n) = SO(2n) / U(n)$.
So, an almost-complex structure $J$ on $\R^{2n}$ can be regarded as a smooth map $J : \R^{2n} \to \wJ(n)$.

\begin{lemma} \label{HEP} Let $M\subset \R^{2n}$ be a closed submanifold and let $J : M \to \wJ(n)$ be a smooth map. Then there exists a smooth extension $\widetilde J : \R^{2n} \to \wJ(n)$ if and only if $J$ is homotopic to a constant.
\end{lemma}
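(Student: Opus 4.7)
The plan is as follows. For the ``only if'' direction, I would exploit the contractibility of $\R^{2n}$: given any smooth extension $\widetilde J : \R^{2n} \to \wJ(n)$, the formula $(x,t) \mapsto \widetilde J((1-t)x)$ defines a smooth homotopy from $\widetilde J$ to the constant map at $\widetilde J(0)$, and restricting to $M$ exhibits $J$ as homotopic to that constant.

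For the ``if'' direction, the idea is to combine a smooth tubular neighborhood of $M$ with the given null-homotopy. Let $H : M \times [0,1] \to \wJ(n)$ be a smooth homotopy with $H(\cdot, 0) = J$ and $H(\cdot, 1) \equiv c$ for some constant $c \in \wJ(n)$, reparametrized in the $t$-variable so that it is locally constant near $t=0$ and near $t=1$. Fix a smooth tubular neighborhood $\pi : N \to M$ (which exists since $M$ is a closed submanifold of $\R^{2n}$), together with a smooth cutoff $\rho : \R^{2n} \to [0,1]$ that vanishes on a smaller open neighborhood of $M$ and equals $1$ on a neighborhood of $\R^{2n} \setminus N$. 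Then set
\[
\widetilde J(x) = \begin{cases} H\bigl(\pi(x),\, \rho(x)\bigr) & \text{if } x \in N, \\ c & \text{if } x \notin N. \end{cases}
\]
On $M$ we have $\pi(x) = x$ and $\rho(x) = 0$, so $\widetilde J|_M = H(\cdot,0) = J$. Smoothness across $\partial N$ is automatic because $\rho \equiv 1$ near $\partial N$ and $H(\cdot, 1) \equiv c$ identically there, so both branches agree smoothly on a neighborhood of $\partial N$.

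The main subtlety I anticipate is ensuring that the null-homotopy $H$ can in fact be taken smooth rather than merely continuous. This should follow from standard smoothing: since $J$ is already smooth and $\wJ(n)$ is a smooth manifold, any continuous null-homotopy can be approximated by a smooth one that still equals $J$ at $t=0$. Equivalently, one could phrase the whole argument abstractly via the smooth Homotopy Extension Property for the cofibration $M \hookrightarrow \R^{2n}$, which yields the extension directly from the null-homotopy.
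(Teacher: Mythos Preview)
Your proof is correct. The ``only if'' direction matches the paper's exactly (contractibility of $\R^{2n}$), and for the ``if'' direction both arguments rest on the same idea, but they are packaged differently.

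The paper invokes the Homotopy Extension Property abstractly: it extends the constant map $J_0$ over all of $\R^{2n} \times \{0\}$, applies HEP (citing Hatcher) to the pair $(\R^{2n}, M)$ to extend the homotopy $H$ to $\widetilde H$ on $\R^{2n}\times[0,1]$, and then restricts to $t=1$. Your argument instead builds the extension by hand using a tubular neighborhood and a cutoff, which is essentially the standard proof that a closed submanifold inclusion is a cofibration. What your version buys is that smoothness is visibly under control at every step---the reparametrization of $H$ and the choice of $\rho$ make the gluing manifestly $C^\infty$---whereas the paper's citation of the continuous HEP in Hatcher leaves the passage back to a smooth $\widetilde J$ implicit. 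Conversely, the paper's one-line appeal to HEP is shorter and highlights that the result is purely homotopy-theoretic. Your final remark that one could ``phrase the whole argument abstractly via the smooth Homotopy Extension Property'' is exactly what the paper does.
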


\begin{proof} The `only if' part follows immediately from the fact that $\R^{2n}$ is contractible.

Let us prove the `if' part. Consider a smooth homotopy
$H : M \times [0,1] \to \wJ(2n)$ such that $H_0(x) = J_0$ for all $x \in M$, and $H_1 = J$ where $H_t(x) = H(x,t)$ and $J_0 \in \wJ(n)$. We can extend $H$ to $\R^{2n} \times \{0\} \subset \R^{2n} \times [0,1]$ by setting $H(x,0) = J_0$ for any $x \in \R^{2n}$. By the homotopy extension property \cite[Chapter 0]{hatcher} there exists $\widetilde H : \R^{2n} \times [0,1] \to \wJ(n)$ which extends $H$. We conclude the proof by setting $\widetilde J = \widetilde H_1$.
\end{proof}

Let $(M,J)$ be an almost-complex manifold. The strategy to prove Theorem \ref{main} will be to choose an arbitrary embedding $f:M \hookrightarrow \R^{6m}$, which exists for the weak Whitney embedding theorem, and to show that $J$ extends to the pullback $f^*(T\R^{6m})$ and this extension is null-homotopic.

Consider the standard filtration $SO(1)\subset SO(2) \subset \cdots$. Since $SO(n-1)$ contains the $(n-2)$-skeleton of $SO(n)$ (because the standard fibration $SO(n) \to S^{n-1}$) it follows that the $k$-skeleton of $SO(n)$ is contained on $SO(k+1)$ for $0 \leq k \leq n-2$.

Since $SO(n) \subset U(n)$ it follows that $U(n)$ contains the $(n-1)$-skeleton of $SO(2n)$ for $n \geq 1$. Then the homomorphism induced by the inclusion $i_* : \pi_j(U(n)) \to \pi_j(SO(2n))$ is an isomorphism for $j \leq n-2$ and is an epimorphism for $j = n-1$.

From the homotopy exact sequence of the fibre bundle $SO(2n) \to \J(n)$ given by the projection map it follows that $\pi_j(\wJ(n))\cong \pi_j(\J(n)) \cong 0$ for $j \leq n-1$.

\begin{definition}
A space $X$ is said to be $n$-connected if $\pi_j(X) \cong 0$ for all $j \leq n$.
\end{definition}

In particular, $0$-connected means path-connected.

From the above considerations we have that $\wJ(n)$ is $(n-1)$-connected. The following proposition is well-known in the theory of CW-complexes.

\begin{proposition}\label{null-homot}
If $X$ is $n$-connected then any map $Y \to X$ defined on a CW-complex $Y$ of dimension $ \leq n$ is homotopic to a constant.
\end{proposition}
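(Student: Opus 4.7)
The plan is to prove the statement by induction on the skeleta of $Y$, constructing a homotopy to a constant map one dimension at a time and gluing the partial homotopies via the homotopy extension property for CW-pairs.

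First I would fix a map $f : Y \to X$ and a basepoint $x_0 \in X$. Since $X$ is $n$-connected with $n \geq 0$, in particular $X$ is path-connected, so each point of the $0$-skeleton $Y^{(0)}$ can be joined to $x_0$ by a path in $X$. Using these paths together with the homotopy extension property for the CW-pair $(Y, Y^{(0)})$, I obtain a homotopy of $f$ to a new map $f_0 : Y \to X$ whose restriction to $Y^{(0)}$ is the constant map $x_0$.

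Next I proceed by induction: assume for some $0 \leq k < n$ that $f$ has been homotoped to a map $f_k$ which sends the entire $k$-skeleton $Y^{(k)}$ to $x_0$. For each $(k+1)$-cell of $Y$ with characteristic map $\Phi : D^{k+1} \to Y$, the composition $f_k \circ \Phi$ sends $\partial D^{k+1} = S^k$ to $x_0$ and hence defines an element of $\pi_{k+1}(X, x_0)$. Because $k+1 \leq n$ and $X$ is $n$-connected, this element is trivial, so $f_k \circ \Phi$ is homotopic to the constant $x_0$ relative to $\partial D^{k+1}$. Performing this construction simultaneously on all $(k+1)$-cells produces a homotopy of $f_k|_{Y^{(k+1)}}$ to the constant map $x_0$ relative to $Y^{(k)}$. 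By the homotopy extension property applied to the CW-pair $(Y, Y^{(k+1)})$, this extends to a homotopy of $f_k$ to a map $f_{k+1} : Y \to X$ sending $Y^{(k+1)}$ to $x_0$.

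Since $\dim Y \leq n$, after at most $n+1$ such steps I obtain a map $f_n : Y \to X$ which is identically equal to $x_0$ on $Y^{(n)} = Y$, i.e.\ $f$ is homotopic to the constant map $x_0$. The main technical point is the repeated use of the homotopy extension property to upgrade cellwise null-homotopies (obtained from the vanishing of $\pi_{k+1}(X)$) to a homotopy defined on all of $Y$; once this is in place the argument is a straightforward induction on the dimension of the skeleton.
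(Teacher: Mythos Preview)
Your argument is correct and is exactly the standard cell-by-cell obstruction argument one would expect. Note that the paper itself does not give a proof of this proposition at all: it simply states it as ``well-known in the theory of CW-complexes'' and moves on, so there is nothing to compare against beyond observing that your proof supplies the details the authors chose to omit.
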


Also the following proposition is standard, and we give only the idea of the proof.

\begin{proposition}\label{sections/pro}
Let $\xi : E \to M$ be an oriented real vector bundle of rank $2k$ over an $m$-manifold $M$. If $k \geq m$ then $\xi$ admits a positive complex structure.
\end{proposition}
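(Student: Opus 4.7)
The plan is to regard a positive complex structure on $\xi$ as a global section of the fiber bundle $\wJ(\xi)\to M$ associated with $\xi$, whose fiber over $x\in M$ is the space of positive linear complex structures on the oriented vector space $E_x$, canonically identified with $\wJ(k)$. Equivalently, one is trying to reduce the structure group of $\xi$ from $GL^+(2k,\R)$ to $GL(k,\C)$, and this is a standard obstruction-theoretic question.

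First I would fix a smooth triangulation (or CW decomposition) of $M$ of dimension $m$ and build the section inductively over the skeleta. On the $0$-skeleton a section always exists because the fiber $\wJ(k)$ is nonempty. Passing from the $j$-skeleton to the $(j+1)$-skeleton is the content of the inductive step.

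The heart of the argument is the standard obstruction calculation. The obstruction to extending a given section from the $j$-skeleton to the $(j+1)$-skeleton is a cohomology class in $H^{j+1}\!\bigl(M;\pi_j(\wJ(k))\bigr)$ (with local coefficients if needed). Only the range $0\le j\le m-1$ has to be considered, because $H^{j+1}(M;-)=0$ for $j+1>m$. Under the hypothesis $k\ge m$ we have $m-1\le k-1$, and the preliminary remarks recorded above show that $\wJ(k)$ is $(k-1)$-connected, so $\pi_j(\wJ(k))=0$ throughout the relevant range $0\le j\le m-1\le k-1$. Consequently every obstruction vanishes automatically, and the section extends over all skeleta, producing a positive complex structure on $\xi$.

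The only mild subtlety I anticipate is the local coefficient system: strictly speaking the $\pi_j(\wJ(k))$ should be viewed as a bundle of groups over $M$ determined by the monodromy of $\xi$. Since $\wJ(k)$ is simply connected (its homotopy equivalent $SO(2k)/U(k)$ is), this monodromy action is trivial in all degrees that matter, so ordinary cohomology suffices and no complication arises. Beyond setting up obstruction theory correctly there is no real obstacle, which is why the authors content themselves with giving only the idea of the proof.
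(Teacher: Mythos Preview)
Your argument is correct and is precisely the approach of the paper: reinterpret a positive complex structure as a section of the associated $\wJ(k)$-bundle and use the $(k-1)$-connectedness of $\wJ(k)$, with the paper simply citing Steenrod for the obstruction-theoretic step you have spelled out. One small remark: the local-coefficient system is trivial because the structure group $GL^+(2k,\R)$ is connected (not merely because the fibre is simply connected), though this is irrelevant here since the coefficient groups themselves vanish.
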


\begin{proof}
Consider the bundle $\xi^{\J} : \wJ(E) \to M$ with fibre $\wJ(k)$ induced by $\xi$. Namely, for any $p\in M$ the fibre of $\xi^{\J}$ over $p$ is the space of positive linear complex structures on $\xi^{-1}(p)$. Since $\wJ(k)$ is $(k-1)$-connected, it follows that $\xi^{\J}$ admits a section if $k \geq m$, see \cite[Part~III]{steenrod}. This section is a positive complex structure on $\xi$.
\end{proof}

Let $f:M \rightarrow \mathbb{R}^N$ be an immersion. The normal bundle $\nu_f(M)$ is, as usual, the orthogonal complement of $TM$ in $f^*(T \mathbb{R}^N)$, that is to say: \[ f^*(T \mathbb{R}^N) = TM \oplus \nu_f(M). \]

If $M$ is oriented then the normal bundle can be equipped with a canonical orientation, namely that which makes the splitting of $f^*(T\R^N)$ into a Whitney sum of oriented fibre bundles, where $\R^N$ is considered with the standard orientation.

\section{Proof of the main results}

\begin{theorem}\label{k-ext}
Let $M \subset \R^{2n}$ be a submanifold of even dimension endowed with an almost-complex structure $J$. If the normal bundle of $M$ in $\R^{2n}$ admits a positive complex structure with respect to the canonical orientation, then for any $k \geq \max(0, \dim_{\R} M - n + 1)$ there exists an almost-complex structure $\widetilde J$ on $\R^{2n} \times \R^{2k}$ such that $M \times \{0\} \subset \R^{2n} \times \R^{2k}$ is an almost-complex submanifold.
\end{theorem}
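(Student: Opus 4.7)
The plan is to first assemble an almost-complex structure on the restriction of $T(\R^{2n}\times\R^{2k})$ to $M\times\{0\}$, and then to invoke Lemma \ref{HEP} to extend it to the ambient Euclidean space.

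By hypothesis the normal bundle $\nu$ of $M$ in $\R^{2n}$ carries a positive complex structure $J_\nu$. Together with $J$ on $TM$, this gives a positive almost-complex structure $J\oplus J_\nu$ on $TM\oplus\nu\cong M\times\R^{2n}$, which I view as a smooth map $J_0:M\to\wJ(n)$ (using the standard parallelism to trivialize $T\R^{2n}|_M$). Over $M\times\{0\}\subset\R^{2n}\times\R^{2k}$ I then form $J_0\oplus J_{\mathrm{st}}$, where $J_{\mathrm{st}}$ is the standard complex structure on $\R^{2k}$; this produces a smooth map $\w{J}_0:M\to\wJ(n+k)$ that by construction preserves $TM$ and restricts to $J$ on it. Consequently, as soon as $\w{J}_0$ admits an ambient extension, the submanifold $M\times\{0\}$ will automatically be almost-complex with induced structure $J$.

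The remaining task is therefore to extend $\w{J}_0$ to a smooth map $\widetilde J:\R^{2n+2k}\to\wJ(n+k)$. By Lemma \ref{HEP}, applied to the closed submanifold $M\times\{0\}\subset\R^{2n+2k}$, this is possible if and only if $\w{J}_0$ is homotopic to a constant. From the preliminaries $\wJ(n+k)$ is $(n+k-1)$-connected, while $M$, being a smooth manifold, carries a CW structure of dimension $\dim_{\R}M$. Hence, whenever $\dim_{\R}M\leq n+k-1$, equivalently $k\geq\dim_{\R}M-n+1$, Proposition \ref{null-homot} produces the required null-homotopy. The bound $k\geq 0$ is needed only to make sense of the factor $\R^{2k}$, and is the operative one exactly in the range $\dim_{\R}M\leq n-1$, where $J_0$ itself is already null-homotopic and no stabilization is required.

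The main point is really just the connectivity-versus-dimension bookkeeping that pins down the exact bound $k\geq\max(0,\dim_{\R}M-n+1)$; beyond that, the argument is a direct-sum stabilization followed by the homotopy extension property, and positivity of $\widetilde J$ comes for free because $\wJ(n+k)$ is by definition the space of positive linear complex structures.
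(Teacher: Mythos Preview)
Your proof is correct and follows essentially the same route as the paper's: form the direct-sum complex structure on $T\R^{2n}|_M$ from $J$ and $J_\nu$, stabilize by the standard structure on the $\R^{2k}$ factor to obtain a map $M\to\wJ(n+k)$, use the $(n+k-1)$-connectivity of $\wJ(n+k)$ together with Proposition~\ref{null-homot} to force a null-homotopy in the stated range, and then invoke Lemma~\ref{HEP} to extend. The only cosmetic difference is that the paper separates the cases $\dim_{\R}M\le n-1$ and $\dim_{\R}M>n-1$, whereas you handle them uniformly.
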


\begin{proof}
Let us choose a positive complex structure on the normal bundle of $M$. Then by taking the Whitney sum with the almost-complex structure on $M$ we get a complex structure on $(T\R^{2n})_{|M}$. So we obtain a smooth map $J : M \to \wJ(n)$.

In view of Lemma \ref{HEP} our target is to get a $J$ null-homotopic. This is so if $\dim_{\R} M \leq n-1$ because $\wJ(n)$ is $(n-1)$-connected and Proposition~\ref{null-homot}.

If $\dim_{\R} M > n-1$ we take the product $\R^{2n} \times \R^{2k}$, where $\R^{2k}$ is endowed with the standard complex structure, and we embed $M$ as $M \times \{0\}$. We get a complex structure on the normal bundle of $M$ in $\R^{2n} \times \R^{2k}$ in the obvious way. So we obtain a map $J_k : M \to \wJ(n+k)$. It follows that $J_k$ is homotopic to a constant if $k \geq \dim_{\R} M - n +1$. In this case $J_k$ extends on $\R^{2n} \times \R^{2k}$ by Lemma \ref{HEP}.
\end{proof}

It follows that if $(M,J)$ is contained in $\C^n$ with a complex normal bundle and if $n \geq 2 \dim_{\C} M + 1$, then there is a positive almost-complex structure $\widetilde J$ on $\C^n$ which makes $(M, J)$ an almost-complex submanifold of $(\C^n, \widetilde J)$.

\begin{proof}[Proof of Theorem \ref{main}]
Let $f : M \hookrightarrow \R^{6m}$ be any embedding. The normal bundle $\nu_f(M)$ has rank $4m$ and is orientable. By Proposition \ref{sections/pro} there is a complex structure on the normal bundle and then we conclude by an application of Theorem \ref{k-ext} with $k = 0$.
\end{proof}

In some cases we can construct an embedding in an euclidean space of lower dimension.
Recall that an \emph{s-inverse} of the tangent bundle $TM$ is a vector
bundle $\xi$ such that $TM \oplus \xi$ is a trivial vector bundle. Observe that if $f: M \rightarrow \mathbb{R}^{N}$ is an immersion then the normal bundle $\nu_f(M)$ is a real s-inverse of the tangent bundle $TM$. The converse also holds and is a Theorem of Hirsch \cite{Hir59}, and is given as follows.

\begin{theorem} \label{Hirsh} (Hirsch \cite{Hir59})
Any s-inverse of $TM$ is the normal bundle of some immersion $f: M \rightarrow \mathbb{R}^{N}$.
\end{theorem}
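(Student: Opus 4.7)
The plan is to reduce Hirsch's theorem to Hirsch's immersion theorem (the $h$-principle for immersions), which provides exactly the mechanism for converting the formal linear-algebraic data of an s-inverse into a genuine immersion. An s-inverse $\xi$ of $TM$ with $TM \oplus \xi \cong M \times \R^N$ is, by the very definition, a realization of $TM$ as a sub-bundle of the trivial rank-$N$ bundle, with $\xi$ appearing as the orthogonal complement after choosing a metric.

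First I would reformulate the data. A splitting $TM \oplus \xi \cong \epsilon^N := M \times \R^N$ is equivalent to a bundle monomorphism $\varphi \colon TM \hookrightarrow \epsilon^N$ having $\xi$ as its orthogonal complement (with respect to the standard metric on $\epsilon^N$). Such a $\varphi$ corresponds to a smooth map $M \to V_m(\R^N)$ into the Stiefel manifold of $m$-frames, where $m = \dim M$; this is the \emph{formal} data of an immersion of $M$ into $\R^N$.

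Next I would invoke Hirsch's immersion theorem in its $h$-principle form: provided $N > m$, every bundle monomorphism $TM \to \epsilon^N$ is homotopic, through bundle monomorphisms, to the differential $Tf \colon TM \to T\R^N \cong \epsilon^N$ of some smooth immersion $f \colon M \to \R^N$. Applying this to our $\varphi$ yields an immersion $f$ together with a homotopy $\varphi_t$ of bundle monomorphisms from $\varphi_0 = \varphi$ to $\varphi_1 = Tf$.

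Finally I would identify the normal bundle of $f$ with $\xi$. Taking orthogonal complements fibrewise along the homotopy $\varphi_t$ produces a continuous family of rank-$(N-m)$ sub-bundles of $\epsilon^N$, hence a bundle isomorphism between the complement at $t=0$, which is $\xi$, and the complement at $t=1$, which is $\nu_f(M)$. This gives $\nu_f(M) \cong \xi$ as oriented real vector bundles, completing the proof. The substantive content, and the main obstacle, is of course Hirsch's $h$-principle itself; once that is taken as input, the s-inverse condition feeds in precisely the formal bundle-theoretic hypothesis it requires, and the rest is a routine extraction of the complementary bundle.
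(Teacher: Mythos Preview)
The paper does not give its own proof of this statement: Theorem~\ref{Hirsh} is simply quoted from Hirsch~\cite{Hir59} and used as a black box. So there is nothing in the paper to compare against.

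Your sketch is correct and in fact recapitulates Hirsch's original argument. The only point to flag is the hypothesis $N>m$ in the $h$-principle step: this amounts to requiring that the s-inverse $\xi$ have positive rank, which is harmless in the paper's applications (where $\xi$ has real rank $2k\geq 2$) but is a genuine restriction in general---for instance, the rank-$0$ bundle is an s-inverse of $TS^1$, yet $S^1$ does not immerse in $\R^1$. You should state this caveat explicitly.
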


Let $\xi$ be a complex s-inverse of $(TM,J)$ of complex rank $k$, namely $TM \oplus \xi$ is trivial as a real vector bundle. Now Hirsch's Theorem \ref{Hirsh} implies that there exists an immersion $f:M \rightarrow \R^{2(m+k)}$ such that $\xi$ is isomorphic to $\nu_f(M)$ as real vector bundles. So $\nu_f(M)$ carries a complex structure.

Up to a product with some $\R^{2h}$, we can assume that $k \geq m + 1$, and then $f$ is regularly homotopic, namely homotopic through immersions, to an embedding $f_1 : M \to \R^{2(m+k)}$. It follows that $\nu_{f_1}(M) \cong \nu_f(M)$ carries a complex structure.
Now apply Theorem \ref{k-ext} to get $\widetilde J$.

If the rank of $\xi$ satisfies $m + 1 \leq k \leq 2m - 1$ we get a pseudo-holomorphic embedding in an euclidean space of complex dimension $m +k < 3m$.

Let $(S^6,J)$ be the six-dimensional sphere equipped with the standard almost-complex structure $J$ obtained from the octonion multiplication.
Theorem \ref{main} implies that $(S^6,J)$ can be pseudo-holo\-mor\-phically embedded in $(\R^{18},\widetilde{J})$ for a suitable positive almost-complex structure $\widetilde J$. Using the existence of a low-dimensional s-inverse of $(TS^6,J)$ we have the following result.

\begin{corollary}
The almost-complex sphere $(S^6,J)$ can be pseudo-holo\-mor\-phically embedded in $(\R^{14},\linebreak[1] \widetilde{J})$ for a suitable positive almost-complex structure $\widetilde J$.
\end{corollary}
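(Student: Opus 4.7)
The strategy is to apply the Hirsch-based construction described just before the statement, using a complex s-inverse of $(TS^6,J)$ of the smallest useful complex rank. Since $S^6 \subset \R^7$ has trivial normal line bundle, $TS^6$ is stably parallelizable: $TS^6 \oplus (S^6 \times \R) \cong S^6 \times \R^7$, and hence $TS^6 \oplus (S^6 \times \R^8) \cong S^6 \times \R^{14}$. Consequently the trivial complex bundle $\xi = S^6 \times \C^4$ of complex rank $k=4$ is a complex s-inverse of $(TS^6,J)$ in the sense of the preceding discussion.

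With $m=3$ and $k=4$ we have $k \geq m+1$, so Hirsch's Theorem \ref{Hirsh} yields an immersion $f : S^6 \to \R^{2(m+k)} = \R^{14}$ whose real normal bundle is isomorphic to $\xi$. Since $14 \geq 2\cdot 6 + 1$, $f$ is regularly homotopic to an embedding $f_1 : S^6 \to \R^{14}$, and the normal bundle $\nu_{f_1}(S^6) \cong \xi$ still carries the complex structure of $\xi$ (replaced by its complex conjugate if necessary, so as to match the canonical orientation of $\nu_{f_1}(S^6)$).

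We then apply Theorem \ref{k-ext} with $n=7$ and $k=0$: the inequality $k \geq \max(0, \dim_{\R} M - n + 1) = 0$ is satisfied, so the theorem produces a positive almost-complex structure $\widetilde J$ on $\R^{14}$ making $f_1$ a pseudo-holomorphic embedding of $(S^6,J)$ into $(\R^{14},\widetilde J)$. The only non-formal step is the identification of a complex s-inverse of complex rank $m+1=4$; a smaller rank would give a lower-dimensional embedding, but the requirement that the immersion be regularly homotopic to an embedding forces $2(m+k) \geq 2m+1$, so $k=4$ is the minimum attainable by this method.
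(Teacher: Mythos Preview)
Your argument is correct, but it takes a different and longer route than the paper's proof. The paper simply uses the standard embedding $S^6 \subset \R^8$, whose normal bundle is trivial of real rank $2$ and hence carries a positive complex structure; it then applies Theorem~\ref{k-ext} directly with $n=4$ and $k=3$ (since $\dim_\R S^6 - n + 1 = 3$), obtaining $\widetilde J$ on $\R^8 \times \R^6 = \R^{14}$. No appeal to Hirsch's theorem or to regular homotopy is needed.

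Your approach instead invokes the Hirsch-based construction: you build a complex s-inverse $\xi$ of complex rank $4$, produce an immersion $S^6 \to \R^{14}$ with normal bundle $\xi$, promote it to an embedding by general position, and only then apply Theorem~\ref{k-ext} with $n=7$, $k=0$. This works, but it introduces machinery (Hirsch's theorem, regular homotopy of immersions) that the direct argument avoids. The point of the paper's proof is precisely that the \emph{existing} low-codimension embedding $S^6 \subset \R^8$ with trivial normal bundle already places you in the hypotheses of Theorem~\ref{k-ext}. Incidentally, your closing inequality ``$2(m+k)\geq 2m+1$'' is miswritten; the condition for regular homotopy to an embedding is $2(m+k)\geq 2\cdot 2m+1 = 4m+1$, i.e.\ $k\geq m+1$, which is how you actually get $k=4$.
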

\begin{proof}
Since $S^6$ is embedded in $\mathbb{R}^8$ with trivial normal bundle we conclude by an application of Theorem \ref{k-ext} with $k = 3$.
\end{proof}

Notice that $(S^6,J)$ can not be pseudo-holo\-mor\-phically embedded in $(\R^{12},\widetilde{J})$. In fact, the Euler class of the normal bundle of any embedding of $S^6$ in $\R^{12}$ is zero by a theorem of Whitney, see \cite[p. 138]{Hir76}. On the other hand, if $S^6$ is contained pseudo-holomorphically in $(\R^{12}, \widetilde J)$, by a straightforward computation with the Chern class, we obtain for the Euler class $e(\nu(S^6)) = c_3(\nu(S^6)) = -2 \lambda \neq 0$, which is a contradiction, where $\lambda \in H^6(S^6)$ is the standard generator.

We conclude with a question. Since our construction is essentially homotopy-the\-o\-ret\-ic, we are unable to control the integrability of the almost-complex structure $\widetilde J$ of Theorem~\ref{main}. So the following question is very natural.

\begin{question}
Let $(M, J)$ be an integrable complex manifold. Is there an embedding of $(M, J)$ into an integrable $(\R^{2n}, \widetilde J)$?
\end{question}

\vspace{0.5cm}
\begin{center}
{\bf Acknowledgment}
\end{center}
We thank Simon Salamon for useful comments and remarks.

\end{document}